\newtheorem{thrm}{Theorem}
\newtheorem{lemm}[thrm]{Lemma}
\newtheorem{cor}[thrm]{Corollary}
\theoremstyle{definition}\newtheorem{defn}[thrm]{Definition}
\theoremstyle{remark}
\newcommand{\vect}[2]{\left[ \begin{array}{r}{#1}\\{#2}\end{array} \right]}
\newcommand{\mat}[4]{\left[ \begin{array}{rr}{#1}&{#2}\\{#3}&{#4}\end{array} \right]}
\numberwithin{equation}{section}
\begin{document}
\title{Radix Representations, Self-Affine Tiles, and Multivariable Wavelets}
\author{Eva Curry}
\address{Department of Mathematics and Statistics, Acadia University,
  Wolfville, Nova Scotia, Canad B4P 2R6}
\email{eva.curry@acadiau.ca}

\subjclass{Primary 52C22, 42C40; Secondary 11A63}
\date{February 28, 2005}
\keywords{Self-affine tiling, radix representation, multivariable wavelet, Haar-like wavelet, dilation matrix}

\begin{abstract}We investigate the connection between radix representations for $\mathbb{Z}^n$ and self-affine tilings of $\mathbb{R}^n$.  We apply our results to show that Haar-like multivariable wavelets exist for all dilation matrices that are sufficie
ntly large.\end{abstract}

\maketitle

\section{Introduction}\label{sec_intro}
We investigate the connections between radix representations for $\mathbb{Z}^n$, self-affine tilings of $\mathbb{R}^n$, and Haar-like scaling functions for multiresolution analyses and associated wavelet sets.

In a separate paper, we investigate the idea, also introduced by Jeong~\cite{jeo}, of radix representations for vectors in $\mathbb{Z}^n$, or general point lattices $\Gamma = M(\mathbb{Z}^n)$ ($M$ a nondegenerate $n \times n$ matrix)~\cite{cu1}.  We wish 
to consider expanding matrices which preserve $\Gamma$.  Without loss of generality, we may assume $\Gamma = \mathbb{Z}^n$.  A matrix which preserves $\mathbb{Z}^n$ must have integer entries.
\begin{defn}  A \emph{dilation matrix} for $\mathbb{Z}^n$ is an $n \times n$ matrix $A$ with integer entries, all of whose eigenvalues $\lambda$ satisfy $|\lambda| > 1$.
\end{defn}
Note that for a dilation matrix $A$, $q := |\det{A}|$ is an integer, with $q > 1$.  Then $\mathbb{Z}^n / A(\mathbb{Z}^n)$ has nontrivial cokernel.  Let $D$ be a complete set of coset representatives of $\mathbb{Z}^n / A(\mathbb{Z}^n)$.  We call the elemen
ts of $D$ \emph{digits}.  

We may associate a sequence of digits with each $x \in \mathbb{Z}^n$ by the Euclidean algorithm, as follows.  Each $x \in \mathbb{Z}^n$ is in a unique coset of $\mathbb{Z}^n / A(\mathbb{Z}^n)$, thus there exist unique $x_1 \in \mathbb{Z}^n$ and $r_0 \in D
$ such that
\[ x = Ax_1 + r_0. \]
Similarly, for each $x_j$, $j \geq 1$, there exist unique $x_{j+1} \in \mathbb{Z}^n$ and $r_{j} \in D$ such that
\[ x_j = Ax_{j+1} + r_{j}. \]
Formally, we write
\[ x \sim \sum_{j=0}^{\infty} A^j r_j. \]
If there exists a nonnegative integer $N$ such that $r_j = \mathbf{0}$ for all $j > N$, then the Euclidean algorithm terminates and we say that $x$ has a radix representation with radix $A$ and digit set $D$.
\begin{defn}  Let $A$ be a dilation matrix.  We say that the matrix $A$ \emph{yields a radix representation with digit set $D$} if for every $x \in \mathbb{Z}^n$ there exists a nonnegative integer $N = N(x)$ and a sequence of digits $d_0, d_1, \ldots, d_N
$ in $D$ such that
\[ x = \sum_{j=0}^{N} A^j d_j. \]
\end{defn}
That is, a dilation matrix $A$ yields a radix representation with digit set $D$ if \emph{every} $x \in \mathbb{Z}^n$ has a radix representation with radix $A$ and digit set $D$.

Let $A$ be a dilation matrix, and define
\[ \mu = \min{\{ \sigma:\ \mbox{$\sigma$ a singular value of $A$} \}}. \]
Let $F$ be a fundamental domain for $\mathbb{Z}^n$, centered at the origin,
\[ F = \left[ -\frac{1}{2}, \frac{1}{2} \right)^n. \]
In \cite{cu1}, we give the following two results about radix representations.

\begin{thrm}\label{rad_rep}
Let $A$ be an $n \times n$ dilation matrix.  If $\mu > 2$ then $A$ yields a radix representation of $\mathbb{Z}^n$ with digit set $D = A(F) \cap \mathbb{Z}^n$.
\end{thrm}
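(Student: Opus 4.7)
My plan is to interpret the Euclidean algorithm recursion as a discrete dynamical system on $\mathbb{Z}^n$ and show that every orbit reaches $\mathbf{0}$ in finitely many steps.

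First I would verify that $D = A(F) \cap \mathbb{Z}^n$ really is a complete set of coset representatives for $\mathbb{Z}^n / A(\mathbb{Z}^n)$. Since $F$ is a fundamental domain for $\mathbb{Z}^n$ in $\mathbb{R}^n$, the image $A(F)$ is a fundamental domain for the sublattice $A(\mathbb{Z}^n)$, so each coset of $A(\mathbb{Z}^n)$ in $\mathbb{Z}^n$ meets $A(F)$ in exactly one point, yielding $|D| = q$.

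Next, following the preamble, define $T \colon \mathbb{Z}^n \to \mathbb{Z}^n$ by $T(x) = x_1$ where $x = A x_1 + r_0$, $r_0 \in D$. Writing $r_0 = A y(x)$ with $y(x) \in F$, this becomes $T(x) = A^{-1} x - y(x)$, and the radix expansion of $x$ terminates exactly when $T^N(x) = \mathbf{0}$ for some $N$. The singular-value hypothesis supplies the key metric estimate: $\|A^{-1}\|_2 = 1/\mu$ and $\|y\|_2 \leq \sqrt{n}/2$ for $y \in F$, so
\[
  \|T(x)\|_2 \leq \frac{\|x\|_2}{\mu} + \frac{\sqrt{n}}{2}.
\]
Iterating and summing the resulting geometric series shows every $T$-orbit eventually enters a fixed ball about the origin; because the orbit lies in $\mathbb{Z}^n$, it must be eventually periodic.

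The heart of the argument — and what I expect to be the main obstacle — is ruling out nontrivial cycles. Suppose $\{x_0, \dots, x_{k-1}\}$ is a $T$-cycle with associated $y_0, \dots, y_{k-1} \in F$. Iterating $x_{j+1} = A^{-1}x_j - y_j$ around the cycle, solving for $x_0$ via the Neumann series $(I - A^{-k})^{-1} = \sum_{\ell \geq 0} A^{-k\ell}$, and re-indexing yields
\[
  x_0 = -\sum_{j=0}^{\infty} A^{-j}\, \tilde{y}_j, \qquad \tilde{y}_j \in F \text{ periodic of period } k.
\]
Bounding termwise in $\ell^2$ gives $\|x_0\|_2 \leq (\sqrt{n}/2)\cdot \mu/(\mu-1)$, which is strictly less than $\sqrt{n}$ precisely when $\mu > 2$. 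Combining this with the integrality of $x_0$, and applying the same bound to every element of the cycle (via cyclic shifts), should allow us to exclude any cycle other than $\{\mathbf{0}\}$; in low dimension this is immediate, while in general one must use the integrality condition $T^j(x_0) \in \mathbb{Z}^n$ and the explicit form of $D$ to rule out the finitely many remaining near-origin lattice points. It is this last step — converting the strict inequality $\mu > 2$ into a definite statement about integer vectors — that I expect will require the most care.
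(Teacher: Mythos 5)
The paper itself contains no proof of Theorem \ref{rad_rep}; it is imported from the companion paper \cite{cu1}, so there is no in-text argument to compare yours against. Judged on its own terms, your setup is sound and natural: $D = A(F)\cap\mathbb{Z}^n$ is indeed a complete residue system, the map $T(x) = A^{-1}x - y(x)$ with $y(x)\in F$ correctly encodes the Euclidean algorithm, the estimate $\|T(x)\|_2 \le \|x\|_2/\mu + \sqrt{n}/2$ is right (the operator norm of $A^{-1}$ is $1/\mu$), eventual periodicity of integer orbits follows, and the Neumann-series identity for cycle elements is legitimate since the spectral radius of $A^{-k}$ is less than $1$.

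The gap is exactly where you flag it, and it is a missing idea rather than a matter of care. The bound $\|x_0\|_2 \le (\sqrt{n}/2)\,\mu/(\mu-1) < \sqrt{n}$ for a cycle element excludes only lattice points of squared norm at least $n$; every nonzero integer vector with $\sum x_i^2 \le n-1$ survives, and the number of such points grows with $n$. Already for $n=2$ the points $(\pm 1,0)$ and $(0,\pm 1)$ remain, so the conclusion is ``immediate in low dimension'' only for $n=1$. Nor can a routine variant of the same estimate close the gap: running the argument in the $\ell^\infty$ norm, where the lattice gap is $1$, costs a factor of $\sqrt{n}$ in comparing norms and would prove the theorem only under the stronger hypothesis $\mu > 2\sqrt{n}$. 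So a pure contraction-into-a-ball argument cannot, by itself, establish the theorem with the hypothesis $\mu>2$, and your appeal to ``integrality and the explicit form of $D$'' is precisely the unproved content. To complete the proof you would need a concrete mechanism for excluding the nontrivial cycles inside the ball of radius $\sqrt{n}$ --- one that exploits the particular geometry of the digit set $A(F)\cap\mathbb{Z}^n$ (equivalently, that $T$ is coordinate-wise rounding of $A^{-1}x$) rather than only norm estimates.
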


\begin{cor}\label{big_enough}  For every dilation matrix $A$, there exists a positive integer $\beta \geq 1$ such that $A^{\beta}$ yields a radix representation with digit set $D_{\beta} = A^{\beta}(F) \cap \mathbb{Z}^n$.
\end{cor}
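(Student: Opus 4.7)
The plan is to reduce the corollary to Theorem \ref{rad_rep} by passing to a sufficiently high power of $A$. Specifically, I will show that for $\beta$ large enough, the matrix $A^{\beta}$ is itself a dilation matrix whose smallest singular value exceeds $2$; then Theorem \ref{rad_rep}, applied to $A^{\beta}$ in place of $A$, yields the radix representation with the desired digit set $D_{\beta} = A^{\beta}(F) \cap \mathbb{Z}^n$.

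First I would check that $A^{\beta}$ is a dilation matrix for every $\beta \geq 1$. The entries of $A^{\beta}$ are integers since those of $A$ are, and the eigenvalues of $A^{\beta}$ are precisely the $\beta$-th powers of the eigenvalues of $A$. Since $|\lambda| > 1$ for each eigenvalue $\lambda$ of $A$, we have $|\lambda^{\beta}| > 1$ as well, so $A^{\beta}$ satisfies the definition.

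The key step is to show that the smallest singular value $\mu_{\beta}$ of $A^{\beta}$ tends to infinity as $\beta \to \infty$. I would use the identity $\mu_{\beta} = \|A^{-\beta}\|^{-1}$ (operator norm induced by the Euclidean norm). By Gelfand's spectral radius formula,
\[ \lim_{\beta \to \infty} \|A^{-\beta}\|^{1/\beta} = \rho(A^{-1}), \]
and the eigenvalues of $A^{-1}$ are the reciprocals $1/\lambda$ of the eigenvalues of $A$, all of which satisfy $|1/\lambda| < 1$ because $A$ is a dilation matrix. Hence $\rho(A^{-1}) < 1$, which forces $\|A^{-\beta}\| \to 0$ and therefore $\mu_{\beta} \to \infty$.

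Choose $\beta$ large enough that $\mu_{\beta} > 2$. Applying Theorem \ref{rad_rep} to the dilation matrix $A^{\beta}$ shows that $A^{\beta}$ yields a radix representation of $\mathbb{Z}^n$ with digit set $A^{\beta}(F) \cap \mathbb{Z}^n = D_{\beta}$, which is the statement to be proved. I do not expect any real obstacle here: the only substantive ingredient is the spectral radius argument guaranteeing that the singular values of $A^{\beta}$ grow without bound, which is standard.
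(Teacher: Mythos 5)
Your proof is correct and is essentially the intended argument: the paper defers the proof of this corollary to \cite{cu1}, but it is plainly meant as a consequence of Theorem \ref{rad_rep} applied to a high power of $A$, which is exactly what you do. You also correctly avoid the trap of claiming the singular values of $A^{\beta}$ are the $\beta$-th powers of those of $A$ (false for non-normal $A$), using $\mu_{\beta} = \|A^{-\beta}\|^{-1}$ and Gelfand's formula instead.
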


\section{Radix Representations and Tilings}\label{sec_radix}

Radix representations are closely related to self-affine tilings of $\mathbb{R}^n$.
\begin{defn}  A measurable set $Q \subset \mathbb{R}^n$ gives a self-affine tiling of $\mathbb{R}^n$ under translation by $\mathbb{Z}^n$ if
\begin{enumerate}
\item $\cup_{k \in \mathbb{Z}^n} (Q + k) = \mathbb{R}^n$, and the intersection $(Q+k_1) \cap (Q+k_2)$ has measure zero for any two distinct $k_1, k_2 \in \mathbb{Z}^n$ (tiling); and
\item there is a collection of $q = |\det{A}|$ vectors $k_1, \ldots, k_q \in \mathbb{Z}^n$ that are distinct coset representatives of $\mathbb{Z}^n / A(\mathbb{Z}^n)$ such that
\[ A(Q) \simeq \cup_{i=1}^{q} (Q + k_i) \quad \mbox{(self-affine)}. \]
\end{enumerate}
\end{defn}

Set
\[ T = T(A,D) :=\{\xi \in \mathbb{R}^n:\  \xi = \sum_{j=1}^{\infty} A^{-j} d_j\} \]
with the digits $d_j \in D$ for some digit set $D$.  One can easily check that $T$ is a self-affine set.  We would like to be able to think of the elements of $T$ as the fractional parts of vectors in $\mathbb{R}^n$ in the same way that the fractional par
ts of real numbers lie in $[0,1]$.  This is an accurate interpretation if $T$ is congruent to $\mathbb{R}^n / \mathbb{Z}^n$.  Thought of another way, we would like $T$ to tile $\mathbb{R}^n$ under translation by $\mathbb{Z}^n$.

\begin{thrm}\label{radix<=>tiling&0}  Let $A$ be a dilation matrix, and let $D$ be a digit set for $A$.  Then $A$ yields a radix representation with digit set $D$ if and only if the set $T(A,D)$ tiles $\mathbb{R}^n$ under translation by $\mathbb{Z}^n$ and
 the origin $\mathbf{0}$ is in the interior of $T$.
\end{thrm}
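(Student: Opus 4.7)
The theorem splits into two implications, both resting on the Euclidean-algorithm identity from Section~1. Given $x\in\mathbb{Z}^n$, iterating $x_j=Ax_{j+1}+r_j$ with $r_j\in D$ produces
\[
A^{-N}x \;=\; x_N \,+\, \sum_{k=1}^{N}A^{-k}r_{N-k},
\]
in which the partial sum $s_N:=\sum_{k=1}^N A^{-k}r_{N-k}$ is a truncated $T$-series (complete it with zero digits past index $N$) and hence lies in the compact set $T$. Since $A$ is expanding, $A^{-N}x\to\mathbf{0}$, so the integer vectors $x_N=A^{-N}x-s_N$ are eventually bounded; taking only finitely many values under the deterministic step $\tau\colon x_j\mapsto x_{j+1}$, the sequence $(x_N)$ is eventually periodic.

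For the $(\Leftarrow)$ direction, assume $T$ tiles $\mathbb{R}^n$ by $\mathbb{Z}^n$ and $\mathbf{0}\in T^\circ$. It suffices to rule out nontrivial $\tau$-cycles, for then $x_N=\mathbf{0}$ for some $N$ and $x=\sum_{j=0}^{N-1}A^jr_j$ is a radix representation. Given a cycle $c_0\to c_1\to\cdots\to c_{p-1}\to c_0$ with cycle digits $\rho_i$, reading the identity along $N\equiv 0\pmod p$ shows $s_N\to -c_0$, and compactness of $T$ yields $-c_0\in T\cap\mathbb{Z}^n$. The hypothesis $\mathbf{0}\in T^\circ$ combined with tiling forces $T^\circ\cap\mathbb{Z}^n=\{\mathbf{0}\}$: for $k\ne\mathbf{0}$ the ball $B(\mathbf{0},\epsilon)\subset T$ translates to $B(k,\epsilon)\subset T+k$, which by tiling meets $T$ in a measure-zero set, so $B(k,\epsilon)\cap T^\circ=\varnothing$. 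The main obstacle I anticipate is upgrading $-c_0\in T$ to $-c_0\in T^\circ$, which then forces $c_0=\mathbf{0}$ and contradicts the cycle being nontrivial. My intended route: each $s_N$ already lies in $T^\circ$, because $s_N=T_{r'_1}\circ\cdots\circ T_{r'_N}(\mathbf{0})$ with $T_d(\xi):=A^{-1}(\xi+d)$, and $T_d(T^\circ)\subset T^\circ$ (each $T_d$ is an affine bijection with $T_d(T)\subset T$, so its image of the open set $T^\circ$ is open and contained in $T$, hence in $T^\circ$); one then argues that the limit point $-c_0$, being the unique attractive fixed point of the contraction $\Phi:=T_{\rho_0}\circ\cdots\circ T_{\rho_{p-1}}$, inherits the interior property.

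For the $(\Rightarrow)$ direction, assume radix representation holds. Iterating the self-affine relation $AT=\bigcup_{d\in D}(T+d)$ gives
\[
A^NT \;=\; \bigcup_{d_0,\ldots,d_{N-1}\in D}\Bigl(T+\sum_{j=0}^{N-1}A^jd_j\Bigr),
\]
a union which is a.e.\ disjoint, since $|D|=|\det A|$ forces level-one disjointness and this iterates. By the radix hypothesis the index set $\{\sum_{j=0}^{N-1}A^jd_j:d_j\in D\}$ exhausts $\mathbb{Z}^n$ as $N\to\infty$, so $\bigcup_NA^NT=\bigcup_{x\in\mathbb{Z}^n}(T+x)$; combined with expansiveness of $A$ and $\mathbf{0}\in T$, the nested union $A^NT$ fills $\mathbb{R}^n$ (once $T$ contains any neighborhood of $\mathbf{0}$), yielding the tiling. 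To obtain $\mathbf{0}\in T^\circ$ directly from radix representation, my plan is to construct an explicit digit expansion $\xi=\sum_{j=1}^\infty A^{-j}d_j$ with $d_j\in D$ for every $\xi$ in a sufficiently small ball around $\mathbf{0}$, using the radix hypothesis to supply digits inductively via the recursion $A\xi=d_1+\xi_1$ with $\xi_1$ in a controlled (still small) set; this exhibits a neighborhood of $\mathbf{0}$ inside $T$.
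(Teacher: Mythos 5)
Your proposal contains two genuine gaps, one in each direction, and in both cases the missing step is precisely where the paper's proof does its real work.

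In the $(\Leftarrow)$ direction your cycle argument correctly reaches $-c_0\in T\cap\mathbb{Z}^n$, and your observation that $T^\circ\cap\mathbb{Z}^n=\{\mathbf{0}\}$ is fine, but the upgrade from $-c_0\in T$ to $-c_0\in T^\circ$ does not go through by the route you sketch. Knowing $s_N\in T^\circ$ for all $N$ and $s_N\to -c_0$ only gives $-c_0\in\overline{T^\circ}$; limits of interior points can be boundary points, and that is exactly what happens in the obstructing examples (for $A=2$, $D=\{0,1\}$ the cycle $-1\mapsto-1$ gives the fixed point $1\in\partial[0,1]$). Nor does the fixed point of $\Phi=T_{\rho_0}\circ\cdots\circ T_{\rho_{p-1}}$ ``inherit'' interiority: since $\Phi^k$ is affine with linear part $A^{-kp}$ and fixed point $-c_0$, one computes $\Phi^k\bigl(B(\mathbf{0},\epsilon)\bigr)=A^{-kp}(B+c_0)-c_0$, which contains $-c_0$ if and only if $-c_0\in B$ already, i.e.\ only when $\|c_0\|<\epsilon$. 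So for a cycle with $\|c_0\|\ge\epsilon$ your shrinking open sets never capture the limit, and the contradiction is not reached. The paper avoids cycles entirely (Lemma~\ref{0&tiling=>radix}): it sets $D_{A,k}=\{\sum_{j<k}A^jd_j\}$, uses $A^k(T)=\bigcup_{x\in D_{A,k}}(T+x)$ and the tiling to conclude $D_{A,k}\supseteq A^k(T^\circ)\cap\mathbb{Z}^n\supseteq A^k(B)\cap\mathbb{Z}^n$, and lets $k\to\infty$. If you want to keep the cycle formulation, you should redo the endgame along those lines rather than via the attractor.

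In the $(\Rightarrow)$ direction, your disjointness argument (measure counting at level one, iterated, plus exhaustion of $\mathbb{Z}^n$ by the $D_{A,N}$) is essentially the paper's Lemma~\ref{radix=>tiling} and is sound, granting $\mathbf{0}\in D$ so that the $D_{A,N}$ are nested. But the claim $\mathbf{0}\in T^\circ$ is left as a plan, and the plan as stated fails: the recursion $A\xi=d_1+\xi_1$ ``with $\xi_1$ in a controlled small set'' presupposes a neighborhood $U$ of $\mathbf{0}$ with $A(U)\subseteq\bigcup_{d\in D}(U+d)$, which is exactly the assertion $\mathbf{0}\in T^\circ$ in disguise (and is false for, e.g., the twin dragon digit set, where $\mathbf{0}\in\partial T$). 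The radix hypothesis concerns integer vectors and must be injected through the lattice: the paper (Lemmas~\ref{technical} and~\ref{radix=>0}) argues by contradiction, choosing $\zeta_j\to\mathbf{0}$ with $\zeta_j\notin T$ and $\zeta_j=A^{-\ell_j}x_j$ for $x_j\in\mathbb{Z}^n$, expanding $x_j$ by its radix representation to write $\zeta_j=k_j+\xi_j$ with $\xi_j\in T$, and using compactness of $T$ to bound $k_j$ and hence the lengths $N_j$, which forces $\zeta_j\in T$ for large $j$. That approximation-by-$A^{-\ell}(\mathbb{Z}^n)$ plus compactness step is the idea your sketch is missing.
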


We split the proof of this theorem into a few lemmas.

\begin{lemm}\label{radix=>tiling}
Let $A$ be a dilation matrix, and let $D$ be a digit set for $A$.  If $A$ yields a radix representation with a digit set $D$, then the set $T(A,D)$ tiles $\mathbb{R}^n$ under translation by $\mathbb{Z}^n$.
\end{lemm}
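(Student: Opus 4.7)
The plan is to verify the two defining conditions of a tiling in turn; both rest on the self-affine identity $A(T) = \bigcup_{d \in D}(T + d)$, which is immediate from the series defining $T$. I also use that the radix representation hypothesis forces $\mathbf{0} \in D$: since $\mathbf{0}$ itself must have a finite radix expansion and the leading digit of any such expansion is determined by the coset of $\mathbf{0}$ mod $A\mathbb{Z}^n$, the zero-coset representative in $D$ must be $\mathbf{0}$.

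For the essentially-disjoint overlap condition, I would iterate the self-affine identity to obtain $A^N(T) = \bigcup_{v \in D_N}(T + v)$, where $D_N := \{\sum_{j=0}^{N-1} A^j d_j : d_j \in D\}$. Uniqueness of radix digits modulo $A\mathbb{Z}^n$ yields $|D_N| = q^N$, and since $\mu(A^N T) = q^N \mu(T) = \sum_{v \in D_N} \mu(T + v)$, the union must be essentially disjoint. Because $\mathbf{0} \in D$ gives the nesting $D_N \subseteq D_{N+1}$ and the radix representation hypothesis implies $\bigcup_N D_N = \mathbb{Z}^n$, any two distinct $\mathbb{Z}^n$-translates of $T$ intersect only on a null set.

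For the covering, given $y \in \mathbb{R}^n$ I would construct $k \in \mathbb{Z}^n$ and $t \in T$ with $y = k + t$ by approximation. Decompose $A^N y = m_N + r_N$ with $m_N \in \mathbb{Z}^n$ and $r_N \in F$; apply the radix representation to write $m_N = \sum_{j=0}^{L_N} A^j d_j^{(N)}$ (padding with zero digits so the index runs at least to $N - 1$); then divide by $A^N$ to obtain $y = A^{-N} r_N + t_N + k_N$, where $k_N \in \mathbb{Z}^n$ is assembled from the indices $j \geq N$ and $t_N = \sum_{i=1}^{N} A^{-i} d_{N-i}^{(N)}$ is a partial sum that lies in $T$. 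Since $A$ is expanding, $A^{-N} r_N \to \mathbf{0}$; the $t_N$ are uniformly bounded by compactness of $T$, so the $k_N$ are also bounded and hence take only finitely many integer values. Extracting a subsequence on which $k_N \equiv k$ is constant and $t_N$ converges (using compactness of $T$) yields $y = k + t$ with $t \in T$.

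I expect the covering step to be the main obstacle: one must carefully split the radix expansion of $m_N$ into an \emph{integer} piece $k_N$ and a \emph{fractional} piece $t_N \in T$, verify the bookkeeping, and extract a convergent subsequence that actually produces a limit point in $T$. The disjointness, by contrast, reduces to a measure count once the self-affine identity is iterated.
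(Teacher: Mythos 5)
Your argument is essentially the paper's for the heart of the lemma, and it is correct where it matters, but one side-claim is wrong. The essential-disjointness step is the same measure count the paper uses: iterate $A(T)=\bigcup_{d\in D}(T+d)$ to get $A^N(T)=\bigcup_{v\in D_N}(T+v)$ with $\lvert D_N\rvert=q^N$, compare $m(A^N(T))=q^N m(T)$ with subadditivity to force pairwise null overlaps, and use the radix hypothesis (plus $\mathbf{0}\in D$, which gives $D_N\subseteq D_{N+1}$) to place any two integers in a common $D_N$. The paper phrases this as $m(T\cap(T+x))=0$ for each representable $x\neq\mathbf{0}$, but it is the same computation. For the covering you part ways with the paper: the paper simply cites Proposition 5.19 of Wojtaszczyk, which gives $\bigcup_{x\in\mathbb{Z}^n}(T+x)=\mathbb{R}^n$ for any complete digit set with no radix hypothesis at all, whereas you prove it directly by expanding $A^N y$ and splitting the radix expansion of its integer part into an integer head $k_N$ and a fractional tail $t_N\in T$. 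That argument is correct and self-contained (the tail lies in $T$ precisely because $\mathbf{0}\in D$ lets you pad the infinite series with zero digits, and compactness of $T$ justifies the subsequence extraction), so it buys independence from the citation at the cost of invoking a hypothesis the covering property does not actually need.

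The one genuine error is your claimed derivation of $\mathbf{0}\in D$ from the radix hypothesis. Knowing that the leading digit of an expansion of $\mathbf{0}$ must be the zero-coset representative does not force that representative to equal $\mathbf{0}$, since the expansion may have length greater than one. In fact, under the literal ``finite sum'' definition the implication fails: for $n=1$, $A=-2$, $D=\{-2,1\}$, every integer admits a finite expansion (e.g.\ $0=-2+(-2)\cdot 1+(-2)^2\cdot 1$), yet $\mathbf{0}\notin D$, and here $T=[-4/3,5/3]$ has measure $3$ and does not tile under $\mathbb{Z}$-translation --- so the lemma itself needs $\mathbf{0}\in D$. This must be taken as a standing assumption rather than a consequence; it is built into the paper's Euclidean-algorithm notion of termination, since the trailing remainders $r_j=\mathbf{0}$ are required to be digits. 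With that assumption granted, as the paper grants it, the rest of your proof goes through.
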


\begin{proof}  Applying Proposition 5.19 from \cite{woj} the dilation matrix $A$, digit set $D$, and corresponding set $T = T(A,D)$ satisfy
\begin{enumerate}
\item $T$ is a compact subset of $\mathbb{R}^n$;
\item $A(T) = \cup_{d \in D} (T+d)$;
\item $\cup_{x \in \mathbb{Z}^n} (T+x) = \mathbb{R}^n$; and
\item $T$ contains an open set.
\end{enumerate}
To show that $T$ tiles $\mathbb{R}^n$ under translation by $\mathbb{Z}^n$, we must show that
\[ m((T+x) \bigcap (T+y)) = 0\ \mbox{for all}\ x \neq y, x,y \in \mathbb{Z}^n \]
(where $m(\cdot)$ denotes Lebesgue measure).  We extend an idea of Lagarias and Wang (\cite{lw1}, p.31) to show that $m((T+x) \cap (T+y)) = 0$ for any distinct $x,y \in \mathbb{Z}^n$ for which there is a radix representation with radix $A$ and digit set $
D$.

Note that for any subset $Q$ of $\mathbb{R}^n$, $m(A(Q)) = q m(Q)$ (where $q = |\det{A}|$).  In particular,
\begin{align*}
q m(T) &= m(A(T)) = m\left(\bigcup_{d \in D} (T+d)\right)\ \mbox{(by property $2$ of $T$)}\\
       &\leq \sum_{d \in D} m(T+d) = \sum_{d \in D} m(T) = q m(T).
\end{align*}
Additionally, property $2$ implies that $A^{k+1}(T) = \cup_{d \in D} (A^k(T) + A^kd)$ for all $k \geq 0$.  Then
\[
q^{k+1} m(T) \leq \sum_{d \in D} m(A^k(T) + A^kd) = \sum_{d \in D} q^k m(T) = q^{k+1} m(T).
\]
Thus
\[ m\left((A^k(T) + A^kd_i) \bigcap (A^k(T) + A^kd_j)\right) = 0 \]
for all distinct $d_i, d_j \in D$.

Since $\mathbf{0} \in D$, $A^{k+1}(T) \supset A^{k}(T)$ for all $k \geq 0$.  So 
\[ (A^{k+1}(T) + A^{k+1}d) \supset (A^{k}(T) + A^{k+1}d) \]
for all $d \in D$.  Now consider $T+x$ for any $x \in \mathbb{Z}^n$.  By hypothesis,
\[ x = A^Nd_N + \sum_{j=0}^{N-1}A^jd_j \]
for some $N\geq 0$, with $d_N \neq \mathbf{0} \in D$ and the $d_j \in D$.  Then
\begin{align*}
T+x &= \{y \in \mathbb{R}^n:\ y = A^Nd_N + \sum_{j=0}^{N-1}A^jd_j + \sum_{j=-\infty}^{-1}A^jd_j,\ \mbox{all}\ d_j \in D\}\\
    &\subset A^{N-1}(T) + A^Nd_N \subset A^{N}(T) + A^Nd_N,
\end{align*}
and
\[ (T \bigcap (T+x)) \subseteq (A^{N}(T) \bigcap (A^{N}(T) + A^Nd_N)). \]
Then $d_N \neq \mathbf{0} \in D$ implies that
\[ 0 = m\left(A^N(T) \bigcap (A^N(T) + A^Nd_N)\right) \geq m\left(T \bigcap (T+x)\right). \]  \end{proof}

If the set $T = T(A,D)$ tiles $\mathbb{R}^n$, it is not necessarily true that $A$ yields a radix representation.  For example, in the case where $A = 2$, we can find a radix representation for all nonnegative integers with the digit set $D = \{ 0, 1\}$, o
r for all non-positive integers with the digit set $D = \{0, -1\}$, but we cannot represent all integers with a radix representation using any digit set~\cite{mat}.  Yet $T = [0,1]$ (with digit set $D = \{0, 1\}$), does tile $\mathbb{R}$ under translation
 by $\mathbb{Z}$.  Similarly, if $A$ is the twin dragon matrix \cite{woj},
\[ A = \mat{1}{1}{-1}{1} \]
then $A$ is a dilation matrix.  A digit set for $A$ is
\[ D = \{d_0 = \vect{0}{0}, d_1 = \vect{1}{0}\}, \]
and the set $T$ generated by the twin dragon matrix with this digit set tiles $\mathbb{R}^2$ under translation by $\mathbb{Z}^n$~\cite{woj}, yet $A$ does not yield a radix representation of $\mathbb{Z}^n$ (for example, the vector \[\vect{0}{-1}\] does not
 have a radix representation)~\cite{cu1}.
\begin{figure}[tb]
\begin{center}
\includegraphics{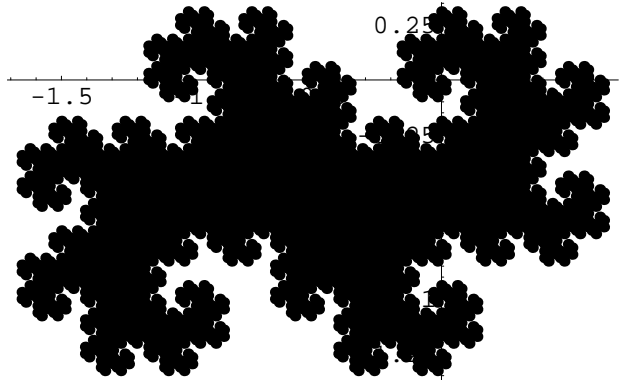}
\end{center}
\caption{The tile $T$ for the twin dragon matrix.}\label{fig:td_tile}
\end{figure}

In both of the examples, the origin $\mathbf{0}$ is on the boundary of the tile $T$, so that $\cup_{k =0}^{\infty} A^k(T) \subsetneq \mathbb{R}^n$.  We claim in Theorem \ref{radix<=>tiling&0} that this must be the case for all dilation matrices $A$ which 
give tiles $T$ but do not yield radix representations.  We first introduce a technical lemma.

\begin{lemm}\label{technical}  Let $A$ be a dilation matrix, and let $D$ be a digit set for $A$.  If $\mathbf{0} \not\in T^{\circ}$ then there exists an increasing subsequence $\{\ell_j\}_{j \geq 1}$ of the positive integers and a sequence of vectors $\{ 
\zeta_j:\ \zeta_j \in A^{-\ell_j}(\mathbb{Z}^n)\}_{j \geq 1}$ converging to $\mathbf{0}$ such that $\zeta_j \not\in T$ for all $j \geq 1$.
\end{lemm}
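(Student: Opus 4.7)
The plan is to combine two observations: first, that $T$ is compact (hence closed), so its complement $T^c$ is open; second, that because $A$ is expanding, the nested lattices $A^{-\ell}(\mathbb{Z}^n)$ become arbitrarily fine as $\ell \to \infty$. Since $\mathbf{0} \notin T^\circ$, every ball about $\mathbf{0}$ meets $T^c$, and intersecting such a ball with a sufficiently fine lattice should yield a point of $A^{-\ell}(\mathbb{Z}^n)$ lying in $T^c$ and close to $\mathbf{0}$.

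The mesh estimate I would establish first: every eigenvalue $\lambda$ of $A$ has $|\lambda| > 1$, so the spectral radius of $A^{-1}$ is strictly less than $1$ and $\|A^{-\ell}\| \to 0$. The set $A^{-\ell}([0,1]^n)$ is a fundamental parallelepiped for $A^{-\ell}(\mathbb{Z}^n)$, and its diameter is bounded by $\sqrt{n}\,\|A^{-\ell}\|$, which tends to $0$. Hence for every $\delta > 0$ there is $L$ such that for all $\ell \geq L$, every point of $\mathbb{R}^n$ lies within Euclidean distance $\delta$ of some element of $A^{-\ell}(\mathbb{Z}^n)$.

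The construction is then inductive. For each $j \geq 1$, use $\mathbf{0} \notin T^\circ$ to pick $p_j$ with $|p_j| < 1/(2j)$ and $p_j \notin T$. Since $T$ is closed, choose $\delta_j \in (0, 1/(2j))$ with the open ball $B(p_j, \delta_j)$ disjoint from $T$. Using the mesh estimate, select $\ell_j$ so large that the mesh of $A^{-\ell_j}(\mathbb{Z}^n)$ is less than $\delta_j$ and also $\ell_j > \ell_{j-1}$; this is always possible, since only a lower bound on $\ell$ is required. Then any $\zeta_j \in A^{-\ell_j}(\mathbb{Z}^n) \cap B(p_j, \delta_j)$ satisfies $\zeta_j \notin T$ and $|\zeta_j| < 1/j$, as required.

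The main obstacle is really just the mesh estimate for the dilated lattices, which is where the defining expanding property of $A$ enters essentially; everything else is routine approximation. A minor bookkeeping point is checking that $\ell_j$ can be taken strictly increasing with the lattice-point $\zeta_j$ still available, but this follows from the nesting $A^{-\ell}(\mathbb{Z}^n) \subseteq A^{-(\ell+1)}(\mathbb{Z}^n)$, which in turn comes from $A$ having integer entries so that $A(\mathbb{Z}^n) \subseteq \mathbb{Z}^n$.
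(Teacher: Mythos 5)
Your proposal is correct and follows essentially the same route as the paper: use $\mathbf{0}\notin T^{\circ}$ to find points near the origin outside $T$, use compactness (closedness) of $T$ to surround them with balls disjoint from $T$, and use the expanding property of $A$ to find points of $A^{-\ell_j}(\mathbb{Z}^n)$ inside those balls. Your treatment of the mesh estimate via the spectral radius of $A^{-1}$ is in fact more explicit than the paper's, which simply asserts that $A^{-\ell_j}(\mathbb{Z}^n)$ is ``a fine enough lattice.''
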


\begin{proof}  In the proof of Lemma \ref{radix=>tiling}, we noted that the set $T$ is compact.  Thus for any $\omega \in \mathbb{R}^n$ such that $\omega \not\in T$, $d(\omega,T) > 0$, and there exists an open ball centered at $\omega$, $B_{\omega}$, such
 that $\overline{B_{\omega}} \cap T = \emptyset$.

Let $\{y_j\}_{j \geq 1}$ be a sequence of vectors in $\mathbb{R}^n$ converging to $\mathbf{0}$ with $y_j \not\in T$ for all $j \geq 1$.  Set $\epsilon_j = \|y_j\|_{l^2}$, and notice that $\epsilon_j \geq d(y_j,T) > 0$, with $\lim_{j \rightarrow \infty} \epsilon_j = 0$.

Set $r_1 = \frac{d(y_1,T)}{2}$ and
\[ r_j = \min{\left\{ \frac{r_{j-1}}{2}, \frac{d(y_j,T)}{2} \right\}} \]
for $j \geq 2$.  Then $\{r_j\}_{j \geq 1}$ is a decreasing sequence of positive numbers, $\lim_{j \rightarrow \infty} r_j = 0$, and the open balls $B_j$ of radius $r_j$ centered at the vectors $y_j$ satisfy $\overline{B_j} \cap T = \emptyset$ for all $j \
geq 1$.  By construction, if $y_j^*$ is any point in the ball $B_j$ for each $j \geq 1$, then $\lim_{j \rightarrow \infty} y_j^* = \mathbf{0}$ and $y_j^* \not\in T$ for all $j \geq 1$.

In particular, there exists an increasing subsequence $\{\ell_j\}_{j \geq 1}$ of the positive integers such that $A^{-\ell_j}$ is a fine enough lattice to ensure that $A^{-\ell_j}(\mathbb{Z}^n) \cap B_j \neq \emptyset$ for each $j \geq 1$.  We may choose 
some $\zeta_j \in A^{-\ell_j}(\mathbb{Z}^n) \cap B_j$ for each $j \geq 1$.  By construction, $\zeta_j \not\in T$ for each $j \geq 1$, and the sequence $\{\zeta_j\}_{j \geq 1}$ converges to $\mathbf{0}$.
\end{proof}

\begin{lemm}\label{radix=>0}  Let $A$ be a dilation matrix, and let $D$ be a digit set for $A$.  If $A$ yields a radix representation with digit set $D$, then $\mathbf{0}$ is in the interior of the set $T$ generated by $A$ and $D$.
\end{lemm}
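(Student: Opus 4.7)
The plan is to argue by contradiction, using Lemma~\ref{technical} together with a logarithmic upper bound on the length of radix representations.

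\textbf{Key observation.} If $w \in \mathbb{Z}^n$ has a radix representation of length at most $\ell$, writing $w = \sum_{k=0}^{\ell-1}A^k d_k$ (padding with zero digits), then
\[
A^{-\ell}w \;=\; \sum_{k=0}^{\ell - 1} A^{k-\ell} d_k \;=\; \sum_{m=1}^{\ell} A^{-m} d_{\ell-m} \;\in\; T.
\]
Contrapositively, if $\zeta = A^{-\ell}w$ with $w \in \mathbb{Z}^n$ and $\zeta \notin T$, the radix-representation length $L(w)$ strictly exceeds $\ell$.

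\textbf{Bound on radix length.} Because $A$ is a dilation, $\rho(A^{-1}) < 1$, so there is a norm $\|\cdot\|_*$ on $\mathbb{R}^n$ with $c := \|A^{-1}\|_* < 1$ and $c$ arbitrarily close to $\rho(A^{-1})$. The Euclidean algorithm $w_{k+1} = A^{-1}(w_k - r_k)$ satisfies $\|w_{k+1}\|_* \leq c\|w_k\|_* + cM$ where $M = \max_{d\in D}\|d\|_*$, so iteration drops $\|w_k\|_*$ into a fixed bounded set in $O(\log \|w\|_*)$ steps; on that finite set of integers the algorithm terminates in a uniformly bounded number of further steps (by hypothesis). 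Hence
\[
L(w) \;\leq\; \alpha\,\log\|w\|_* + \beta, \qquad \alpha = \tfrac{1}{\log(1/c)},
\]
for constants depending only on $A$, $D$, and the norm.

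\textbf{Deriving the contradiction.} Suppose $\mathbf{0} \notin T^\circ$. Lemma~\ref{technical} supplies $\zeta_j \in A^{-\ell_j}(\mathbb{Z}^n)\setminus T$ with $\zeta_j \to \mathbf{0}$ and $\ell_j$ strictly increasing; by taking the auxiliary sequence $\{y_j\}$ in the proof of Lemma~\ref{technical} to decay arbitrarily fast, $\|\zeta_j\|$ can be driven to zero as rapidly as we please. Setting $w_j = A^{\ell_j}\zeta_j \in \mathbb{Z}^n\setminus\{\mathbf{0}\}$, the Key Observation forces $L(w_j) > \ell_j$. On the other hand, $\|w_j\|_* \leq \|A^{\ell_j}\|_*\,\|\zeta_j\|_*$, and Gelfand's formula gives $\|A^\ell\|_*^{1/\ell} \to \rho(A)$, so
\[
\ell_j \;<\; L(w_j) \;\leq\; \alpha\bigl(\ell_j\log\rho(A) + \log\|\zeta_j\|_*\bigr) + o(\ell_j) + \beta.
\]
Choosing the norm so that $\alpha\log\rho(A)$ is close to $1$, and then making $\|\zeta_j\|$ decay fast enough to overwhelm any residual positive error, the right-hand side eventually falls below $\ell_j$, producing the desired contradiction.

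\textbf{Main obstacle.} The crux is the quantitative balance in the last step: since $\|A^{-1}\|_*\,\|A\|_* \geq 1$ for any operator norm, the product $\alpha\log\rho(A)$ can be pushed arbitrarily close to, but not strictly below, $1$, so the negative term $\alpha\log\|\zeta_j\|_*$ must dominate a possibly linear-in-$\ell_j$ error. Exploiting the freedom granted by Lemma~\ref{technical} in choosing the decay rate of the auxiliary sequence $\{y_j\}$ independently of $\ell_j$ is precisely what makes this balance feasible.
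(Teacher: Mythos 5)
Your ``Key observation'' and the logarithmic bound $L(w)\leq\alpha\log\|w\|_{*}+\beta$ are both correct, but the final balancing step --- which you yourself flag as the main obstacle --- does not close, and the freedom in Lemma~\ref{technical} is not enough to rescue it. First, the constant $\alpha\log\rho(A)$ does not tend to $1$: taking $c\to\rho(A^{-1})$ and using Gelfand's formula gives the limit $\log\rho(A)/\log(1/\rho(A^{-1}))=\log\rho(A)/\log\mu_{-}$, where $\mu_{-}=\min_i|\lambda_i|$; this is strictly greater than $1$ whenever the eigenvalue moduli are not all equal (the obstruction is spectral, not merely the submultiplicative inequality $\|A\|_{*}\|A^{-1}\|_{*}\geq 1$). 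So you must absorb a deficit that is genuinely linear in $\ell_j$, of size about $\ell_j(\log\rho(A)/\log\mu_{-}-1)$, using only the term $\alpha\log\|\zeta_j\|_{*}$, which requires $\|\zeta_j\|\leq(\mu_{-}/\rho(A))^{\ell_j}$ up to subexponential factors. But the decay of $\|\zeta_j\|$ cannot be prescribed independently of $\ell_j$: to find a point of $A^{-\ell_j}(\mathbb{Z}^n)$ in the ball $B_j$, whose radius is at most $d(y_j,T)/2\leq\|y_j\|/2$ because $\mathbf{0}\in T$, you need the covering radius of the lattice $A^{-\ell_j}(\mathbb{Z}^n)$ to beat $r_j$, and that covering radius decays only like $\mu_{-}^{-\ell_j}$ (it is governed by the slowest-contracting direction of $A^{-1}$). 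Hence the construction delivers $\|\zeta_j\|\approx\mu_{-}^{-\ell_j}$, so $\alpha\log\|\zeta_j\|\approx-\ell_j$ and the required inequality reduces to $\log\rho(A)/\log\mu_{-}<2$; for a dilation matrix such as $\mathrm{diag}(2,5)$ this is false and no contradiction is obtained. Making $y_j$ decay faster only forces $\ell_j$ to grow proportionally, and taking $\ell_j$ larger than necessary inflates $\|w_j\|=\|A^{\ell_j}\zeta_j\|$ and hence the upper bound on $L(w_j)$. So the quantitative route, as set up, fails for general $A$.

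The paper avoids norm-growth estimates entirely. It expands $x_j=\sum_{i=0}^{N_j}A^{i}d^{(j)}_i$ and splits $\zeta_j=A^{-\ell_j}x_j=k_j+\xi_j$, where $k_j\in\mathbb{Z}^n$ collects the terms with $i\geq\ell_j$ and $\xi_j\in T$ collects the terms with $i<\ell_j$. Compactness of $T$ bounds the $\xi_j$, and $\zeta_j\to\mathbf{0}$ then confines the $k_j$ to a finite subset of $\mathbb{Z}^n$; from this the paper concludes that the lengths $N_j$ are eventually dominated by $\ell_j$, so that every exponent $i-\ell_j$ appearing in $\zeta_j$ is negative and $\zeta_j\in T$, contradicting the choice of $\zeta_j$. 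The essential difference is that the paper exploits the exact arithmetic structure (the integer part of $\zeta_j$ must itself be a radix-represented lattice point near $\mathbf{0}$), whereas your argument replaces this by worst-case operator-norm bounds $\|A^{\ell}\|\approx\rho(A)^{\ell}$ and $L(w)\approx\log\|w\|/\log\mu_{-}$, which are individually sharp in incompatible directions. To salvage your approach you would need direction-sensitive versions of both estimates, which is in effect what the paper's decomposition accomplishes.
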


\begin{proof}  We prove the desired result by contradiction.  Assume that $\mathbf{0} \not\in T^{\circ}$.  By Lemma \ref{technical}, let $\{ \zeta_j \}_{j \geq 1}$ be a sequence of vectors in $\mathbb{R}^n$ converging to $\mathbf{0}$ such that $\zeta_j \i
n A^{-\ell_j}(\mathbb{Z}^n)$ (for some increasing subsequence $\{\ell_j\}_{j \geq 1}$ of the positive integers) but $\zeta_j \not\in T$ for all $j \geq 1$.  We may write $\zeta_j = A^{-\ell_j} x_j$ for some $x_j \in \mathbb{Z}^n$ for each $j$.  Since $A$ 
yields a radix representation, there exists an integer $N_j$ for each $x_j$ and digits $d^{(j)}_0, \ldots, d^{(j)}_{N_j} \in D$ such that
\[ x_j = \sum_{i=0}^{N_j} A^i d^{(j)}_i. \]
Thus
\[ \zeta_j = \sum_{i=0}^{N_j} A^{i-\ell_j} d^{(j)}_i = k_j + \xi_j \]
with $k_j \in \mathbb{Z}^n$ and $\xi_j \in T$.

Since $T$ is compact, $\| \xi_j \|_{l^2}$ is bounded above by $b$ for some $b > 0$.  Then, since $\zeta_j$ converges to $\mathbf{0}$, $\| k_j \|_{l^2}$ is also bounded above for sufficiently large $j$.  We use the rough estimate that there exists an integ
er $M \geq 1$ such that for all $j \geq M$, $\| k_j \|_{l^2} \leq 2b$.  Thus for $j \geq M$, the integer vectors $k_j$ all belong to a finite subset of $\mathbb{Z}^n$.  This implies that there exists an integer $N \geq 0$ such that $N_j \leq N$ for all $j
 \geq M$.  For $j > \max{\{M,N\}}$, $N-\ell_j \leq N-j < 0$, and thus $\zeta_j \in T$ for all sufficiently large $j$.  This contradicts the choice of $\zeta_j$, thus our assumption that $\mathbf{0} \not\in T^{\circ}$ must be false.
\end{proof}

We have shown that if a dilation matrix $A$ yields a radix representation with digit set $D$, then the set $T = T(A,D)$ tiles $\mathbb{R}^n$ under translation by $\mathbb{Z}^n$, and $\mathbf{0} \in T^{\circ}$.  Next we prove the converse.

\begin{lemm}\label{0&tiling=>radix}  Let $A$ be a dilation matrix, and let $D$ be a digit set for $A$.  If the set $T = T(A,D)$ tiles $\mathbb{R}^n$ under translation by $\mathbb{Z}^n$ and if $\mathbf{0} \in T^{\circ}$, then $A$ yields a radix representat
ion with digit set $D$.
\end{lemm}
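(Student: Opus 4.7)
The plan is, for each $x \in \mathbb{Z}^n$, to show that $x$ lies in the finite set
\[
\mathcal{Z}_N := \Bigl\{\sum_{j=0}^{N-1}A^j d_j : d_j \in D\Bigr\}
\]
of integer vectors admitting a length-$N$ radix expansion, for some $N = N(x)$. The whole argument will hinge on a ball around $x$ lying in the tile $T+x$: using $\mathbf{0} \in T^\circ$, fix $\epsilon > 0$ with $B_\epsilon(\mathbf{0}) \subset T$; translating yields $B_\epsilon(x) \subset (T+x)^\circ$ for every $x \in \mathbb{R}^n$.

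The structural step is to iterate the self-affine identity $A(T) = \bigcup_{d \in D}(T+d)$ (property (2) of $T$ recalled in the proof of Lemma~\ref{radix=>tiling}) to obtain $A^N(T) = \bigcup_{z \in \mathcal{Z}_N}(T+z)$. Because $\mathbf{0} \in D$, $\mathcal{Z}_N \subseteq \mathcal{Z}_{N+1}$, so $\{A^N(T)\}$ is increasing; and because $A$ is a dilation matrix, $\|A^{-N}\| \to 0$, so the image $A^N(B_\epsilon(\mathbf{0})) \subseteq A^N(T)$ eventually contains any prescribed bounded set. I will therefore fix $N$ so large that $B_\epsilon(x) \subseteq A^N(T)$.

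With this containment in hand the conclusion falls out of a measure argument. Writing
\[
B_\epsilon(x) = \bigcup_{z \in \mathcal{Z}_N}\bigl(B_\epsilon(x) \cap (T+z)\bigr)
\]
as a finite union, each piece with $z \neq x$ has measure zero: $B_\epsilon(x) \cap (T+z) \subseteq (T+x)^\circ \cap (T+z)$ and $m((T+x) \cap (T+z)) = 0$ by the tiling hypothesis. If $x$ were not in $\mathcal{Z}_N$, then $B_\epsilon(x)$ would be exhibited as a finite union of null sets, contradicting $m(B_\epsilon(x)) > 0$. Hence $x \in \mathcal{Z}_N$, giving the sought radix representation.

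The main obstacle I expect is justifying $B_\epsilon(x) \subseteq A^N(T)$ for a single finite $N$: the sets $A^N(T)$ are closed, so the naive open-cover compactness argument on $\mathbb{R}^n = \bigcup_N A^N(T)$ does not apply directly. My remedy will be either a direct quantitative estimate (since $A^N(B_\epsilon(\mathbf{0}))$ contains a Euclidean ball of radius $\epsilon / \|A^{-N}\| \to \infty$), or the passage to the open interiors $A^N(T^\circ)$: using that $T^\circ \subseteq \bigcup_d (T+d)^\circ = A(T^\circ)$ (from the disjoint tile-interiors of the tiling), the $A^N(T^\circ)$ form an increasing open cover of $\mathbb{R}^n$, and $\overline{B_\epsilon(x)}$ then lies in some $A^N(T^\circ) \subseteq A^N(T)$.
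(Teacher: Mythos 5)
Your argument is correct and is essentially the paper's own proof: both decompose $A^N(T)$ as $\bigcup_{z}(T+z)$ over the length-$N$ radix expansions (your $\mathcal{Z}_N$ is the paper's $D_{A,N}$), both use a ball $B_\epsilon(\mathbf{0})\subseteq T^\circ$ whose images $A^N(B_\epsilon(\mathbf{0}))$ expand to cover $\mathbb{Z}^n$, and both conclude that $x$ must appear among the translates covering a neighbourhood of $x$. The only cosmetic difference is the last step, where the paper notes directly that the translated interiors are pairwise disjoint (so $y\notin D_{A,k}$ forces $y\notin A^k(T^\circ)$) rather than running your measure-counting argument on $B_\epsilon(x)$; your worry about finding a single finite $N$ is resolved exactly as in your proposed remedies.
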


\begin{proof}  Following the notation of \cite{lw1}, set
\[ D_{A,k} := \{x \in \mathbb{Z}^n:\ x = \sum_{j=0}^{k-1} A^j d_j, d_j \in D\},\]
the set of vectors that can be expressed with a radix representation of length less than or equal to $k$.  Note that $D_{A,1} = D$.  By construction, $A(T) = \cup_{d \in D} (T+d)$.  Thus
\[ A^{k}(T) = \bigcup_{x \in D_{A,k}} (T + x). \]
By the tiling hypothesis, $(T^{\circ}+x) \cap (T^{\circ}+y) = \emptyset$ for all distinct $x, y \in \mathbb{Z}^n$.  Thus $(T^{\circ} + y) \cap A^{k}(T^{\circ}) = \emptyset$ for all $y \in\mathbb{Z}^n$ with $y \not\in D_{A,k}$, and
\[ D_{A,k} \supseteq (A^k(T^{\circ}) \bigcap \mathbb{Z}^n). \]

Let $B$ be an open ball centered at the origin such that $B \subseteq T^{o}$.  Then
\[ D_{A,k} \supseteq (A^{k}(B) \cap \mathbb{Z}^n). \]
The sets $A^{k+1}(B)$ are expanding, with $\cup_{k \geq 0} A^{k+1}(B) = \mathbb{R}^n$.  Thus
\[ \bigcup_{k \geq 0} D_{A,k} \supseteq \bigcup_{k \geq 0} (A^k(B) \cap \mathbb{Z}^n) = \mathbb{Z}^n. \]
The opposite containment is true as well, since $D_{A,k} \subset \mathbb{Z}^n$ for each $k$.  Thus $\cup_{k \geq 0} D_{A,k} = \mathbb{Z}^n$.
\end{proof}

We have now completed the proof of Theorem \ref{radix<=>tiling&0}.  

Recall that $\mu$ is the smallest singular value of the dilation matrix $A$, and that $F$ is our canonical fundamental domain of $\mathbb{Z}^n$, $F = [-\frac{1}{2}, \frac{1}{2})^n$.  Combining Theorems \ref{rad_rep} and \ref{radix=>tiling}, we also have t
he following corollary.
\begin{cor}
Let $A$ be a dilation matrix, and let $D = A(F) \cap \mathbb{Z}^n$.  If $\mu > 2$ then the set
\[ T = \{ x \in \mathbb{R}^n:\ x = \sum_{j=-\infty}^{-1} A^j d_j, d_j \in D\}\]
tiles $\mathbb{R}^n$ under translation by $\mathbb{Z}^n$.
\end{cor}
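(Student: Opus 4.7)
The plan is to observe that this corollary is essentially a direct composition of the two results cited, so the work is in verifying hypotheses rather than producing any new content. First I would note that the set $T$ displayed in the corollary agrees with the set $T(A,D)$ defined at the start of Section \ref{sec_radix}: the substitution $k = -j$ converts $\sum_{j=-\infty}^{-1} A^{j} d_{j}$ into $\sum_{k=1}^{\infty} A^{-k} d_{-k}$, which is exactly the form used to define $T(A,D)$ (the index set and the digit set are the same, so the two descriptions yield the same set).

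Next I would apply Theorem \ref{rad_rep}. Its hypothesis is precisely $\mu > 2$ together with the specific choice $D = A(F) \cap \mathbb{Z}^n$, both of which are assumed in the corollary. The conclusion is that $A$ yields a radix representation of $\mathbb{Z}^n$ with digit set $D$. In particular, this is a legitimate digit set for $A$, so the hypotheses of Lemma \ref{radix=>tiling} are satisfied.

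Finally I would invoke Lemma \ref{radix=>tiling}: since $A$ yields a radix representation with digit set $D$, the set $T = T(A,D)$ tiles $\mathbb{R}^n$ under translation by $\mathbb{Z}^n$. Combined with the identification of $T$ from the first step, this is exactly the statement of the corollary. There is no real obstacle here, as the work has already been done in proving Theorem \ref{rad_rep} and Lemma \ref{radix=>tiling}; the only thing worth stating explicitly is the change-of-index that matches the two descriptions of $T$.
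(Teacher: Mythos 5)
Your proposal is correct and matches the paper's own argument exactly: the corollary is obtained by combining Theorem \ref{rad_rep} (which gives the radix representation under the hypothesis $\mu > 2$ with $D = A(F) \cap \mathbb{Z}^n$) with Lemma \ref{radix=>tiling} (which converts the radix representation into the tiling property of $T(A,D)$). Your explicit remark about the change of index reconciling the two descriptions of $T$ is a harmless and reasonable addition that the paper leaves implicit.
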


\section{Haar-Like Wavelets}

Self-affine tiles allow us to construct multivariable wavelet sets associated with multiresolution analyses.  We review some basic definitions from wavelet theory here.
\begin{defn}
A \emph{multiresolution analysis (MRA)} associated with a dilation matrix $A$ is a nested sequences of subspaces $\cdots \subset V_{-1} \subset V_{0} \subset V_{1} \subset \cdots$ of $L^2(\mathbb{R}^n)$ satisfying:~\cite{woj}
\begin{enumerate}
\item $\overline{\cup_{j \in \mathbb{Z}} V_{j}} = L^2(\mathbb{R}^n)$;
\item $\cap_{j \in \mathbb{Z}} V_{j} = \{ 0 \}$;
\item $f(x) \in V_{j}$ if and only if $f(Ax) \in v_{j+1}$ for all $j \in \mathbb{Z}$;
\item $f(x) \in V_0$ if and only if $f(x-k) \in V_{0}$ for all $k \in \mathbb{Z}^n$; and
\item there exists a function $\phi(x) \in V_{0}$, called a \emph{scaling function}, such that
\[ \{\phi(x-k):\ k \in \mathbb{Z}^n\} \]
is a complete orthonormal basis for $V_{0}$.
\end{enumerate}
\end{defn}

The existence of multiresolution analyses in dimension $n>1$ has been studied by a number of authors.  In \cite{grm}, Gr\"{o}chenig and Madych showed that if $\phi = (m(Q))^{1/2} \chi_{Q}$ is a scaling function for a multiresolution analysis, then $Q$ mus
t be an affine image of a self-affine tiling of $\mathbb{R}^n$ under translation by $\mathbb{Z}^n$.  They showed also that if $T$ is a set of the form
\[ T = \{ x \in \mathbb{R}^n:\ x = \sum_{j=-\infty}^{-1} A^j d_j, d_j \in D\}\]
with $A$ a dilation matrix and $D$ a digit set for $A$, then $\phi = \chi_{T}$ is the scaling function for a multiresolution analysis (note that $|T| = 1$).  Lagarias and Wang noted that all self-affine tiles $T$ which tile $\mathbb{R}^n$ under translatio
n by $\mathbb{Z}^n$ must be of this form~\cite{lw2}.  A scaling function that is the characteristic function of some measurable set is called a Haar-like scaling function (after the Haar scaling function, which is $\chi_{[0,1]}$).  

Lagarias and Wang studied necessary conditions for sets of the form $T(A,D)$ to tile $\mathbb{R}^n$ under translation by $\mathbb{Z}^n$ in a series of papers (\cite{lw1}, \cite{lw4}, \cite{lw2}, \cite{lw3}).  Much of their work in these papers concerned t
he question of when a set $T(A,D)$ tiles $\mathbb{R}^n$ under translation by a sublattice of $\mathbb{Z}^n$.  In \cite{lw1} and \cite{lw4}, they showed that if $D$ is a complete set of coset representatives of $\mathbb{Z}^n / A(\mathbb{Z}^n)$, then $T(A,D
)$ is a self-affine tile of $\mathbb{R}^n$ under translation by some sublattice of $\mathbb{Z}^n$.  They also studied some properties of the tiling set $T(A,D)$.  He and Lau~\cite{hel} studied sets $T(A,D)$ which tile $\mathbb{R}^n$ under translation by a
 sublattice of $\mathbb{Z}^n$ as well; in particular, they looked at possible digit sets $D$.

In order for $\chi_{T}$ to be a scaling function for a multiresolution analysis, however, we need $T$ to tile $\mathbb{R}^n$ under translation by all of $\mathbb{Z}^n$.  Lagarias and Wang gave some necessary conditions for a dilation matrix $A$ to yield a
 Haar-like scaling function in \cite{lw2} and \cite{lw3}.  They showed that all dilation matrices in dimensions $n=2$ and $3$ yield Haar-like scaling functions.  Our results below give a sufficient condition for a dilation matrix $A$ to yield a Haar-like 
scaling function, in any dimension.  Note that for dilation matrices that yield a Haar-like scaling function, Strichartz has shown that multiresolution analyses and associated wavelet bases with arbitrary regularity can be constructed~\cite{str}.

The results of the previous section imply the following two theorems.
\begin{thrm}
Let $A$ be a dilation matrix, and let $D$ be a digit set for $A$ such that $A$ yields a radix representation for $\mathbb{Z}^n$ with digit set $D$.  Let $T$ be the set depending on $A$ and $D$ defined above.  Then $\phi = \chi_{T}$ is the scaling function
 for a multiresolution analysis.  In particular, if $A$ satisfies $\mu > 2$ and if $D$ is the set $D = A(F) \cap \mathbb{Z}^n$ with $F = [-\frac{1}{2},\frac{1}{2})^n$, then $\phi = \chi_{T}$ is the scaling function for a multiresolution analysis.
\end{thrm}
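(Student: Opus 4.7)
The plan is to combine the tiling results of Section 2 with the standard construction of a multiresolution analysis from a Haar-like scaling function. By hypothesis, $A$ yields a radix representation with digit set $D$, so Lemma \ref{radix=>tiling} gives that $T = T(A,D)$ tiles $\mathbb{R}^n$ under translation by $\mathbb{Z}^n$, and Lemma \ref{radix=>0} gives that $\mathbf{0} \in T^{\circ}$. The tiling property forces $|T| = 1$ and that the family $\{\chi_T(\cdot - k) : k \in \mathbb{Z}^n\}$ is orthonormal in $L^2(\mathbb{R}^n)$.

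Next I would take $V_0$ to be the closed linear span of this orthonormal family and set $V_j = \{ f : f(A^{-j}\,\cdot\,) \in V_0 \}$. Conditions (3), (4), and (5) of the MRA definition hold by construction. The nesting $V_0 \subset V_1$ follows from the self-affine identity $A(T) = \cup_{d \in D}(T + d)$, which, since the translates meet only on a null set, rewrites $\chi_T$ as a finite linear combination of the functions $\chi_T(A\,\cdot\, - d)$; this places $\chi_T \in V_1$, and the nesting propagates by dilation.

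The substantive conditions are (1) and (2). For (2), any $f \in \bigcap_j V_j$ is, for each $j < 0$, constant on every translate $A^{-j}(T + k)$; since $A$ is expanding these sets have diameters growing without bound, and the only $L^2$ function constant on arbitrarily large sets is zero. For (1), the key input is that $\mathbf{0} \in T^{\circ}$: some open ball $B_{\rho}(\mathbf{0})$ is contained in $T$, so every tile at scale $j$ contains a translate of $A^{-j}(B_{\rho})$, whose diameter is controlled from below by $\rho/\|A^{j}\|$. This bounded-geometry property lets a standard Lebesgue-differentiation argument show that the orthogonal projection of any $f \in L^2(\mathbb{R}^n)$ onto $V_j$ converges to $f$ as $j \to \infty$.

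The main obstacle is the density condition (1): the mere shrinking of tile diameters is not automatic without the interior-point hypothesis, which is exactly what the twin-dragon counterexample in Section 2 shows can fail when $\mathbf{0}$ is only a boundary point of $T$. This is precisely where the full strength of Theorem \ref{radix<=>tiling&0} is needed. Once the five MRA conditions are verified, the ``in particular'' clause of the theorem is immediate: Theorem \ref{rad_rep} supplies the hypothesis that $A$ yields a radix representation with digit set $D = A(F) \cap \mathbb{Z}^n$ whenever $\mu > 2$, triggering the first half of the theorem.
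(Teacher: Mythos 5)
The paper does not verify the MRA axioms directly: it observes that Lemma \ref{radix=>tiling} makes $T$ a self-affine tile of $\mathbb{R}^n$ under $\mathbb{Z}^n$-translations and then invokes the quoted theorem of Gr\"ochenig and Madych \cite{grm}, which says precisely that $\chi_T$ is then a scaling function; the ``in particular'' clause follows from Theorem \ref{rad_rep}, as you say. Your from-scratch verification of the five axioms is therefore a genuinely different, more self-contained route, and most of it is sound: orthonormality and $|T|=1$ come from the tiling property, the nesting from $A(T)=\cup_{d}(T+d)$, and the triviality of $\bigcap_j V_j$ goes through --- although there the operative fact is that the coarse tiles have \emph{measure} $q^{|j|}\to\infty$ (forcing $\|f\|_\infty \le q^{-|j|/2}\|f\|_2$), not that their diameters grow; an $L^2$ function can be constant on sets of huge diameter and tiny measure.

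The genuine problem is your treatment of condition (1). You assert that density requires $\mathbf{0}\in T^{\circ}$ and that the twin dragon shows density ``can fail'' otherwise. That is false: the twin-dragon tile is the classical example of a Haar-type scaling set in $\mathbb{R}^2$, and the Gr\"ochenig--Madych result says the tiling property alone suffices --- no condition at the origin is needed. The interior-point condition is what is needed for the \emph{radix representation} (the converse direction of Theorem \ref{radix<=>tiling&0}), not for the MRA. Moreover, the mechanism you propose would not survive scrutiny for a general dilation matrix: the inscribed balls $A^{-j}(B_{\rho})$ have radius comparable to the smallest singular value of $A^{-j}$, while the tiles have diameter comparable to $\|A^{-j}\|$, so the eccentricity of the tiles is unbounded when $A$ has disparate singular values and the ``nicely shrinking'' hypothesis of Lebesgue differentiation fails. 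The correct and simpler argument for density uses only the upper bound: $\operatorname{diam}(A^{-j}(T+k)) \le \|A^{-j}\|\operatorname{diam}(T) \to 0$ because $T$ is compact and $A$ is expanding, so the nested partitions generate the Borel $\sigma$-algebra and the conditional expectations $P_j f \to f$ in $L^2$ (equivalently, $\widehat{\chi_T}$ is continuous at $0$ with $\widehat{\chi_T}(0)=|T|=1$). With that repair your proof is complete --- and it in fact establishes the stronger statement that the tiling property alone yields an MRA, which is what the paper obtains directly from \cite{grm}.
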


\begin{thrm}
Let $A$ be a dilation matrix.  Then there exists a positive integer $\beta \geq 1$ such that for all integers $k \geq \beta$ there exists a multiresolution analysis associated with the dilation matrix $A^k$.
\end{thrm}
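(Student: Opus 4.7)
The plan is to chain together Theorem \ref{rad_rep} and the preceding theorem, but applied to $A^k$ rather than $A$. The key is to show that for sufficiently large $k$, the dilation matrix $A^k$ satisfies the hypothesis $\mu > 2$ of Theorem \ref{rad_rep}, so we get a radix representation, and then the preceding theorem hands us the scaling function $\chi_{T_k}$ of an MRA for $A^k$.

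The first step is to observe that $A^k$ is itself a dilation matrix for every $k \geq 1$: it has integer entries, and its eigenvalues are $\lambda_i^k$, all of modulus $> 1$. So the relevant hypothesis to check is on the smallest singular value $\mu(A^k)$. I would recall that $\mu(A^k) = 1/\|A^{-k}\|$ (operator $\ell^2$-norm), so it suffices to make $\|A^{-k}\|$ small. Since every eigenvalue of $A$ has modulus $> 1$, the spectral radius of $A^{-1}$ is strictly less than one. Gelfand's formula $\|A^{-k}\|^{1/k} \to \rho(A^{-1}) < 1$ then gives $\|A^{-k}\| \to 0$, hence $\mu(A^k) \to \infty$. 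In particular there exists $\beta \geq 1$ such that $\mu(A^k) > 2$ for every $k \geq \beta$.

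With $\beta$ in hand, the rest is automatic: for each $k \geq \beta$, apply Theorem \ref{rad_rep} to the dilation matrix $A^k$ to conclude that $A^k$ yields a radix representation of $\mathbb{Z}^n$ with digit set $D_k = A^k(F) \cap \mathbb{Z}^n$. Then apply the preceding theorem with $A$ replaced by $A^k$ and $D$ by $D_k$; this produces the self-affine tile $T_k = T(A^k, D_k)$, and $\phi_k = \chi_{T_k}$ is the scaling function of an MRA associated with the dilation matrix $A^k$.

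There is no real obstacle; the only subtle point worth flagging is that one should not try to argue monotonically in $k$ starting from a single $A^\beta$ that works (as in Corollary \ref{big_enough}), because the digit set changes with $k$ and $\mu(A^k)$ need not be monotone for small $k$ when $\mu(A) < 1$. The clean way is to use $\|A^{-k}\| \to 0$ directly, which forces $\mu(A^k) > 2$ for all sufficiently large $k$ in one shot and lets us invoke Theorem \ref{rad_rep} uniformly for $k \geq \beta$.
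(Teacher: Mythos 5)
Your proposal is correct and follows essentially the same route the paper intends: the paper offers no explicit proof, stating only that the results of the previous sections imply the theorem, and the intended chain is exactly yours --- apply Theorem \ref{rad_rep} to $A^k$ (whose smallest singular value exceeds $2$ for all large $k$, which is also how Corollary \ref{big_enough} is obtained), then feed the resulting radix representation into the preceding theorem to get the Haar-like scaling function $\chi_{T(A^k,D_k)}$. Your remark that one should argue via $\|A^{-k}\|\to 0$ rather than quoting Corollary \ref{big_enough} verbatim is well taken, since that corollary as stated supplies only a single exponent $\beta$ and not the ``for all $k\geq\beta$'' quantifier that the theorem requires.
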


Thus Haar-like scaling functions and associated MRAs exist for a large class of dilation matrices.



\end{document}